\definecolor{OrangeRed}{cmyk}{0,0.6,1,0}
\definecolor{DarkBlue}{cmyk}{1,1,0,0.20}
\definecolor{Black}{cmyk}{0,0,0,1}
\definecolor{Violet}{cmyk}{0.79,0.88,0,0}
\definecolor{myblue}{rgb}{0.66,0.78,1.00}
\newtheorem{theorem}{Theorem}[section]
\newtheorem{lemma}[theorem]{Lemma}
\theoremstyle{definition}
\newtheorem{problem}[theorem]{Problem}
\newcommand{\C}{\mathbb{C}}
\newcommand{\D}{\mathbb{D}}
\newcommand{\N}{\mathbb{N}}
\newcommand{\cC}{\mathcal{C}}
\newcommand{\cO}{\mathcal{O}}
\newcommand{\ggoth}{{\ensuremath{\mathfrak{g}}}}
\newcommand\Id{\mathrm{Id}}
\newcommand\bihol{\stackrel{\cong }{\longrightarrow}}
\newcommand{\Aut}{\mathrm{Aut}\,}
\newcommand\dist{\mathrm{dist}}
\newcommand\hra{\hookrightarrow}
\newcommand\wh{\widehat}
\newcommand\wt{\widetilde}
\newcommand{\Lie}{\mathrm{Lie}}
\def\bs{\backslash}
\numberwithin{equation}{section}
\begin{document}
\title[Holomorphic families of long $\C^2$'s]
{Holomorphic families of long $\C^2$'s}
\author{Franc Forstneri\v c}
\address{Faculty of Mathematics and Physics, University of Ljubljana, 
and Institute of Mathematics, Physics and Mechanics, Jadranska 19, 
1000 Ljubljana, Slovenia}
\email{franc.forstneric@fmf.uni-lj.si}
\thanks{Supported by grants P1-0291 and J1-2152 from ARRS, Republic of Slovenia.}

%
%
%
%
\subjclass[2000]{32E10, 32E30, 32H02}
\date{\today}
\keywords{Stein manifold, Fatou-Bieberbach domain, long $\C^2$}

%
%
%
%
\begin{abstract}
We construct a holomorphically varying family
of complex surfaces $X_s$, parametrized by 
the points $s$ in any Stein manifold, such that
every $X_s$ is a long $\C^2$ which is biholomorphic 
to $\C^2$ for some but not all values of $s$.
\end{abstract}
\maketitle

\section{The main result}
\label{LongC2}
A complex manifold $X$ of dimension $n$ is a 
{\em long $\C^n$} if $X=\cup_{j=1}^\infty X^j$ 
where $X^1\subset X^2\subset X^3\subset \ldots$ is 
an increasing sequence of open domains exhausting $X$
such that each $X^j$ is biholomorphic to $\C^n$.
Every long $\C$ is biholomorphic to $\C$.
However, for every $n>1$ there exists a long $\C^n$ which 
is not a Stein manifold, and in particular is 
not biholomorphic to $\C^n$. Such manifolds have been 
constructed recently by E.\ F.\ Wold \cite{Wold2010} 
by using his example of a non-Runge Fatou-Bieberbach domain 
in $\C^2$ \cite{Wold2008}, thereby solving a problem 
posed by J.\ E.\ Forn\ae ss  \cite{Forn2004}.
Let us also mention that for every $n\ge 3$ 
Forn\ae ss \cite{Forn1976} constructed an $n$-dimensional
non-Stein complex manifold that is exhausted by biholomorphic images 
of the polydisc; his construction uses Wermer's example of a 
non-Runge embedded polydisc in $\C^3$ \cite{Wermer}.

Recently L.\ Meersseman asked (private communication)
whether it is possible to holomorphically deform the standard $\C^n$
to a long $\C^n$ that is not biholomorphic to $\C^n$.
Here we give a positive answer and show that the behavior of long
$\C^n$'s in a holomorphic family can be rather chaotic.

\begin{theorem}
\label{t:main}
Fix an integer $n>1$.
Assume that $S$ is a Stein manifold, $A=\cup_{j} A_j$ is a 
finite or countable union of closed complex subvarieties of $S$, 
and $B=\{b_j\}$ is a countable set in $S\bs A$. 
Then there exists a complex manifold $X$ and a 
holomorphic submersion $\pi\colon X\to S$ onto $S$ such that
\begin{itemize}
\item[\rm (i)] the fiber $X_s=\pi^{-1}(s)$ is a long $\C^n$
for every $s\in S$,
\item[\rm (ii)]  $X_s$ is biholomorphic to $\C^n$ for every $s\in A$, and
\item[\rm (iii)] $X_s$ is non-Stein for every $s\in B$.
\end{itemize}
\end{theorem}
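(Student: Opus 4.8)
The plan is to build the total space $X$ as an increasing union of "flat" pieces, each of which is a product-like family over $S$ that is fiberwise biholomorphic to $\C^n$, and to glue successive pieces by fiberwise automorphisms whose Runge/non-Runge behavior is controlled by the parameter $s$. Concretely, I would construct an increasing sequence of open sets $X^1 \subset X^2 \subset \cdots$ in $X$ together with biholomorphisms $\Phi_j \colon X^j \to S \times \C^n$ commuting with the projection to $S$, so that each fiber $X^j_s$ is biholomorphic to $\C^n$; then $X = \bigcup_j X^j$ is automatically a long $\C^n$ fiberwise, giving (i). The gluing map $X^j \hookrightarrow X^{j+1}$ is encoded, via the charts $\Phi_j$, by a holomorphic map $S \to \Aut\C^n$ (a fiberwise automorphism $\phi_j$) whose image domain $\phi_j(\C^n)$ is a Fatou--Bieberbach domain in $\C^n$. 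The point is that whether the resulting long $\C^n$ is Stein depends on whether the exhaustion is \emph{Runge}, and Wold's examples from \cite{Wold2008,Wold2010} let me realize both Runge and non-Runge Fatou--Bieberbach domains.

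\emph{Second}, I would arrange the parameter dependence so that over the subvariety $A$ the exhaustion is Runge (forcing $X_s \cong \C^n$, hence (ii)), while over each point $b_j \in B$ some stage produces a non-Runge Fatou--Bieberbach domain (forcing $X_s$ non-Stein, hence (iii)). The natural device is to make the $s$-dependent automorphism $\phi_j$ degenerate to the identity (or to a Runge embedding) exactly along $A$, by multiplying the "twisting" part of the construction by a holomorphic function $f_j$ on $S$ that vanishes on enough of $A$. Since $A = \bigcup_j A_j$ is a countable union of closed subvarieties and $B$ is a countable set disjoint from $A$, I would use an exhaustion argument: at stage $j$ I ensure the map is Runge over $A_1 \cup \cdots \cup A_j$ (using that these are Stein-closed, so I can find holomorphic functions cutting them out and apply an Oka--Weil type approximation to keep the embedding Runge there), while simultaneously installing a non-Runge "bump" localized near the point $b_j$. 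Keeping these two requirements from interfering is where the construction has to be done carefully, point by point and variety by variety, interleaving the countable lists $\{A_j\}$ and $\{b_j\}$.

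\emph{Third}, the technical heart is to produce, with holomorphic dependence on $s$, a family of automorphisms $\phi_s \in \Aut\C^n$ whose image is a Fatou--Bieberbach domain that is Runge for $s$ in a prescribed subvariety and non-Runge for a prescribed point. For the non-Runge part I expect to adapt Wold's construction of a non-Runge Fatou--Bieberbach domain \cite{Wold2008}: his domain is obtained as the basin of attraction of a composition of shears/overshears, and such maps depend holomorphically on their parameters, so one can hope to insert a parameter $s$ and track when the Runge property is lost. The key soft input is that $\Aut\C^n$ for $n\ge 2$ is very large (it contains the shear and overshear subgroups, whose generated group is dense), and that compositions of such maps can be controlled uniformly on compacts in the Andersén--Lempert sense; this is what allows the gluing maps to vary holomorphically in $s$ while realizing the desired Fatou--Bieberbach domains.

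The step I expect to be the main obstacle is precisely this simultaneous control: guaranteeing, in a single holomorphically-parametrized construction, that the exhaustion is Runge for \emph{every} $s \in A$ yet fails to be Runge for \emph{every} $s \in B$, with all maps depending holomorphically on $s \in S$. The difficulty is that "Runge" and "non-Runge" are not open/closed conditions one can prescribe pointwise without worrying about the holomorphic interpolation across the parameter space; one must show that the non-Runge defect, which is naturally produced at isolated parameter values, can be turned off holomorphically along the codimension-one (or higher) subvarieties $A_j$ without destroying it at the nearby points $b_j$. I would handle this by localizing the non-Runge phenomenon in both the $\C^n$-variable and the $s$-variable --- using a function $f_j$ vanishing to high order on $A_1\cup\cdots\cup A_j$ but nonzero at $b_1,\dots,b_j$ --- and by invoking the stability of the Fatou--Bieberbach basin under small holomorphic perturbations to ensure that the qualitative (non-)Runge behavior persists in a full neighborhood of each $b_j$ while collapsing to a Runge (indeed trivial) situation along $A$.
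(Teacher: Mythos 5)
Your global architecture matches the paper's: a direct limit of copies of $S\times\C^n$ glued by fiberwise injective holomorphic maps $\Phi^k(s,z)=(s,\phi^k_s(z))$, holomorphic interpolation functions $f_k$ vanishing on $A_1\cup\cdots\cup A_k$ and nonzero at $b_1,\ldots,b_k$ to switch the twist off along $A$, and an Anders\'en--Lempert renormalization argument showing that Runge images at all large stages force $X_s\cong\C^n$ (the paper's Lemma \ref{lemma1}). The genuine gap is in your mechanism for (iii). You assert that installing a non-Runge Fatou--Bieberbach image at \emph{some stage} over $b_j$ "forces $X_s$ non-Stein". That implication is unjustified: non-Rungeness of a single inclusion $\Omega^k_s\subset\C^n$ is a statement about one particular chart and yields no obstruction that survives passage to the increasing union; there is no known result saying that a long $\C^2$ with a non-Runge stage is non-Stein, and Wold's papers, which you cite, do not argue this way. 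To destroy holomorphic convexity of the limit you need a compact set in the fiber whose $\cO$-hull escapes \emph{every} $\wt X^k_s$, and this must be arranged at every stage $k\ge j$, not just once.

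The paper obtains this from Stolzenberg's/Wold's compact set $Y\subset\C^*\times\C$, which is $\cO(\C^*\times\C)$-convex yet has polynomial hull containing the origin $(0,0)\notin\C^*\times\C$, together with two structural constraints absent from your sketch: (a) every gluing map has image inside $S\times(\C^*\times\C)$ --- the basic building block is a fixed Fatou--Bieberbach map $\theta\colon\C^2\to D\subset\C^*\times\C$ with $D$ Runge, composed with automorphisms of $\C^*\times\C$ rather than of $\C^n$, so the required density property is Varolin's for $\C^*\times\C$, not Anders\'en--Lempert for $\C^n$; and (b) at each stage $k$, automorphisms $\psi_j\in\Aut(\C^*\times\C)$ swallow $Y$ into $\wt\phi^k_{b_j}(\mathrm{Int}\, K)$ for all $j\le k$, so the hull of the fixed compact $K_s\subset X_s$ pokes out of $\wt X^k_s$ for every $k$, making $\wh{(K_s)}_{\cO(X_s)}$ noncompact. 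This design also removes your other weak point: you lean on "stability of the Fatou--Bieberbach basin under small holomorphic perturbations" to preserve non-Rungeness near $b_j$, which is neither known nor needed --- the paper prescribes exact values $f_k(b_j)=j$ and feeds them into a parametric Anders\'en--Lempert lemma (Lemma \ref{lemma2}: an entire curve $\zeta\mapsto\Psi_\zeta\in\Aut(\C^*\times\C)$ with $\Psi_0=\Id$ and $\Psi_j$ approximating $\psi_j$), so no perturbation-stability of Rungeness or non-Rungeness at nearby parameters is ever invoked. Without the hull-escape mechanism, or some substitute for it, your step from "non-Runge bump at $b_j$" to "$X_{b_j}$ non-Stein" does not go through.
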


In particular, for any two disjoint countable sets $A,B\subset \C$ 
there is a holomorphic family $\{X_s\}_{s\in \C}$ of long $\C^2$'s 
such that $X_s$ is biholomorphic to $\C^2$ for all $s\in A$ and is 
non-Stein for all $s\in B$. This is particularly striking if the sets 
$A$ and $B$ are chosen to be everywhere dense in $\C$.

The conclusion of Theorem \ref{t:main} can be strengthened 
by adding to the set $B$ a closed complex subvariety 
of $X$ contained in $X\bs A$. We do not know whether the same 
holds if $B$ is a countable union of subvarieties of $X$.

Several natural questions appear:

\begin{problem}
Given a holomorphic family $\{X_s\}_{s\in S}$ of long $\C^n$'s
for some $n>1$, what can be said about the set of points $s\in S$ for which 
the fiber $X_s$ is (or is not) biholomorphic to $\C^n$?
Are these sets necessarily a $G_\delta$, an $F_\sigma$,
of the first, resp.\ of the second category, etc.?
\end{problem}

A more ambitious project would be to answer the following question:

\begin{problem}
Is there a holomorphic family $X_s$ of long $\C^2$'s,
parametrized by the disc $\D=\{s\in\C\colon |s|<1\}$ 
or the plane $\C$, such that $X_s$ is not biholomorphic 
to $X_{s'}$ whenever $s\ne s'$?
\end{problem}

We do not know of any effective criteria 
to distinguish two long $\C^n$'s from each other,
except if one of them is the standard $\C^n$
and the other one is non-Stein. 
Apparently there is no known
example of a Stein long $\C^n$ other than $\C^n$.
It is easily seen that any two long $\C^n$'s are 
smoothly diffeomorphic to each other, so the gauge-theoretic
methods do not apply. 

To prove Theorem \ref{t:main} we follow Wold's construction 
of a non-Stein long $\C^2$ \cite{Wold2010}, but doing all key 
steps with families of Fatou-Bieberbach maps depending holomorphically
on the parameter in a given Stein manifold $S$.
(The same proof applies for any $n\ge 2$.)
By using the And\'ersen-Lempert theory
\cite{AL,FRosay,Varolin2001,Varolin2000}
we insure that in a holomorphically varying
family of injective holomorphic maps $\phi_s \colon \C^2\hra \C^2$ 
$(s\in S)$ the image domain $\phi_s(\C^2)$ is Runge for 
some but not all values of the parameter. 
In the limit manifold $X$ we thus get fibers $X_s$ 
that are biholomorphic to $\C^2$, as well as fibers that are
not holomorphically convex, and hence non-Stein.

\section{Constructing holomorphic families of long $\C^n$'s}
\label{S2}
Let $S$ be a complex manifold 
that will be used as the parameter space. 
We recall how one constructs a complex manifold $X$ and a holomorphic
submersion $\pi\colon X\to S$ such that the fiber 
$X_s=\pi^{-1}(s)$ is a long $\C^n$ for each $s\in S$. 
(This is a parametric version of the construction in 
\cite{Forn1976} or \cite[\S 2]{Wold2010}.)

Assume that we have a sequence of injective holomorphic maps 
\begin{equation}
\label{eq;Phik}
	\Phi^k\colon X^k= S\times\C^n \hra  X^{k+1}=S\times\C^n,\quad 
	\Phi^k(s,z)=\bigl(s,\phi^k_s(z)\bigr)
\end{equation}
where $s\in S,\ z\in \C^n$, and $k=1,2,\ldots$.
Set $\Omega^k=\Phi^k(X^k)\subset X^{k+1}$. 
Thus for every fixed $k\in\N$ and $s\in S$ 
the map $\phi^k_s\colon \C^n\hra \C^n$ is biholomorphic 
onto its image $\phi^k_s(\C^n)=\Omega^k_s\subset \C^n$
and it depends holomorphically on the parameter $s\in S$.
In particular, if $\Omega^k_s$ is a proper subdomain 
of $\C^n$ then $\phi^k_s$ is a {\em Fatou-Bieberbach map}.
Let $X$ be the disjoint union of all $X^k$ for $k\in\N$
modulo the following equivalence relation.
A point $x\in X^i$ is equivalent to a point $x'\in X^k$ 
if and only if one of the following hold:
\begin{itemize}
\item[\rm (a)] $i=k$ and $x=x'$,
\item[\rm (b)] $k>i$ and $\Phi^{k-1}\circ\cdots \circ\Phi^i(x)=x'$, or
\item[\rm (c)] $i>k$ and $\Phi^{i-1}\circ\cdots \circ\Phi^{k}(x')=x$.
\end{itemize} 
For each $k\in\N$ we have an injective map 
$\Psi^{k}\colon X^k \hra  X$ onto the subset 
$\wt X^k=\Psi^k(X^k) \subset X$ which sends any point 
$x\in X^k$ to its equivalence class $[x]\in X$. 
Denoting by $\iota^k\colon \wt X^k\hra\wt X^{k+1}$
the inclusion map, we have  
\begin{equation}
\label{comm}
	\iota^k \circ \Psi^k = \Psi^{k+1}\circ \Phi^k,\quad k=1,2,\ldots. 
\end{equation}
The inverse maps 
$(\Psi^k)^{-1} \colon \wt X^k \bihol X^k = S\times \C^n$
provide local charts on $X$. It is easily verified that 
this endows $X$ with the structure of a 
Hausdorff, second countable complex manifold. 
Since each of the maps $\Phi^k$ respects the fibers over $S$, 
we also get a natural projection $\pi\colon X\to S$ 
which is clearly a submersion. For every $s\in S$ the fiber 
$X_s$ is the increasing union of open subsets 
$\wt X^k_s$ biholomorphic to $\C^n$.
Observe that we get the same limit manifold $X$
by starting with any term of the sequence (\ref{eq;Phik}).

The next lemma follows from the And\'ersen-Lempert theory
\cite{AL}; c.f.\ \cite[Theorem 1.2]{Wold2010}.

\begin{lemma}
\label{lemma1}
Let $\pi\colon X\to S$ be as above. Assume that for some $s\in S$
there exists an integer $k_s\in \N$ such that 
for every $k\ge k_s$ the domain $\Omega^k_s=\phi_s^k(\C^n)\subset \C^n$ is 
Runge in $\C^n$. Then $X_s$ is biholomorphic to $\C^n$.
\end{lemma}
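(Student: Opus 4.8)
The plan is to prove the statement fibrewise, reducing it to the classical consequence of the And\'ersen--Lempert theory that a complex manifold exhausted by Runge copies of $\C^n$ is biholomorphic to $\C^n$. Since, as noted after (\ref{comm}), the limit manifold $X$ (and hence each fibre $X_s$) is unchanged if we discard the initial maps $\Phi^1,\dots,\Phi^{k_s-1}$, I may reindex and assume $k_s=1$, so that $\Omega^k_s=\phi^k_s(\C^n)$ is Runge in $\C^n$ for \emph{every} $k$. Write $Y_k=\wt X^k_s$; these are open subsets of $X_s$ with $Y_1\subset Y_2\subset\cdots$ and $X_s=\bigcup_k Y_k$, and the restricted charts $a_k=(\Psi^k)^{-1}|_{Y_k}\colon Y_k\bihol\C^n$ are biholomorphisms. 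Restricting (\ref{comm}) to the fibre over $s$ gives the transition identity
\begin{equation*}
  a_{k+1}\circ a_k^{-1}=\phi^k_s\colon\C^n\longrightarrow\Omega^k_s\subset\C^n ,
\end{equation*}
so consecutive charts differ by the Fatou--Bieberbach map $\phi^k_s$ onto the Runge domain $\Omega^k_s$.

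I would then build a biholomorphism $\Theta\colon X_s\bihol\C^n$ as a limit $\Theta=\lim_k\Theta_k$, where $\Theta_k=g_k\circ a_k\colon Y_k\to\C^n$ and the $g_k\in\Aut(\C^n)$ are chosen inductively, starting from $g_1=\Id$. From the transition identity, $\Theta_{k+1}|_{Y_k}=g_{k+1}\circ\phi^k_s\circ a_k$, so $\Theta_{k+1}$ is close to $\Theta_k$ on a compact $a_k^{-1}(M)\subset Y_k$ precisely when $g_{k+1}$ is close to $g_k\circ(\phi^k_s)^{-1}$ on $\phi^k_s(M)\subset\Omega^k_s$. Now $g_k\circ(\phi^k_s)^{-1}$ is a biholomorphism of the \emph{Runge} domain $\Omega^k_s$ onto $\C^n$, and it is isotopic to the inclusion through biholomorphisms onto Runge domains (one contracts the Fatou--Bieberbach map $\phi^k_s$ to its differential at the origin, while $g_k$ lies in the identity component). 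Hence the And\'ersen--Lempert theorem \cite{AL,FRosay} applies and lets me choose $g_{k+1}\in\Aut(\C^n)$ approximating $g_k\circ(\phi^k_s)^{-1}$, uniformly on a prescribed compact subset $L_k\subset\Omega^k_s$, to within any preassigned $\varepsilon_{k+1}>0$. This is the step where the Runge hypothesis on the $\Omega^k_s$ is used, and it is the heart of the matter.

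With the compacts $L_k$ and errors $\varepsilon_k$ chosen appropriately --- each $L_k$ large enough to contain $\phi^k_s(M_k)$ for an exhausting sequence of compacta $M_k\subset\C^n$ and for the sets $a_k\bigl(\Theta_k^{-1}(\{|w|\le k\})\bigr)$, with $\sum_k\varepsilon_k<\infty$ --- a telescoping estimate shows that $(\Theta_k)$ is Cauchy uniformly on compacts of $X_s$, so $\Theta=\lim_k\Theta_k\colon X_s\to\C^n$ exists and is holomorphic. The remaining, and principal, difficulty is to verify that $\Theta$ is a biholomorphism \emph{onto all of} $\C^n$. Here one exploits that each $\Theta_k=g_k\circ a_k$ is itself a biholomorphism of $Y_k$ onto $\C^n$: by arranging that $\Theta_j$ stays within $\varepsilon$ of $\Theta_k$ on $\Theta_k^{-1}(\{|w|\le k\})$ for all $j\ge k$, a Hurwitz/degree argument shows that $\Theta$ is injective on the interiors of these exhausting compacta, hence injective, and that $\Theta(X_s)$ contains every ball $\{|w|<k-\delta\}$, hence all of $\C^n$. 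Therefore $\Theta$ is a biholomorphism and $X_s\cong\C^n$. I expect the bookkeeping in this last paragraph --- simultaneously guaranteeing convergence, injectivity, and surjectivity of the limit --- to be the main obstacle, whereas the existence of the approximating automorphisms at each stage is supplied directly by the And\'ersen--Lempert theory once the Runge property is in hand.
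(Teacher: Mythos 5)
Your proposal is correct and takes essentially the same approach as the paper: the paper's proof consists exactly of the observation that a biholomorphism of $\C^n$ onto a Runge domain can be approximated on compacts by elements of $\Aut\C^n$ (And\'ersen--Lempert theory), followed by renormalizing the charts $(\Psi^k_s)^{-1}\colon \wt X^k_s \to \C^n$ by composing with suitable automorphisms so that the sequence converges to a biholomorphism $X_s\to\C^n$ --- precisely the scheme you carry out, with the convergence/injectivity/surjectivity bookkeeping that the paper explicitly leaves out as ``straightforward details''. Your inverse-direction formulation (approximating $g_k\circ(\phi^k_s)^{-1}$ on compacts of the Runge domain $\Omega^k_s$, justified by contracting $\phi^k_s$ to its linear part) is an equivalent rendering of the paper's stated key fact, not a different method.
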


\begin{proof}
%
%
The main point is that any biholomorphic map $\C^n\bihol \Omega$
onto a Runge domain $\Omega\subset \C^n$ can be approximated
uniformly on compacts  by holomorphic automorphisms of $\C^n$.
This observation allows one to renormalize the sequence of
biholomorphisms $(\Psi^k_s)^{-1} \colon \wt X^k_s \bihol \C^n$ 
for $k\ge k_s$ so that the new sequence converges uniformly on 
compact in $X_s$ to a biholomorphic map $X_s \bihol \C^n$;
we leave out the straightforward details.
\end{proof}

\section{Entire families of holomorphic automorphisms}
\label{S3}
Let $\aleph_{\cO}(X)$ denote the complex Lie algebra of all
holomorphic vector fields on a complex manifold $X$.

A vector field $V\in \aleph_{\cO}(X)$ is said to be 
{\em $\C$-complete}, or {\em completely integrable},
if its flow $\{\phi_t\}_{t\in \C}$ exists for all complex
values $t\in \C$, starting at an arbitrary point  
$x\in X$. Thus $\{\phi_t\}_{t\in \C}$ is a complex 
one-parameter subgroup of the holomorphic automorphism
group $\Aut X$. The manifold $X$ 
is said to enjoy the (holomorphic) {\em density property} if the Lie 
subalgebra $\Lie(X)$ of $\aleph_{\cO}(X)$ generated 
by the $\C$-complete holomorphic vector fields 
is dense in $\aleph_{\cO}(X)$ in the topology of 
uniform convergence on compacts in $X$ 
(see Varolin \cite{Varolin2001,Varolin2000}).
More generally, a complex Lie subalgebra $\ggoth$ of $\aleph_{\cO}(X)$
enjoys the density property if $\ggoth$ is densely
generated by the $\C$-complete vector fields that it contains.
This property is very restrictive on open manifolds. 
The main result of the And\'ersen-Lempert theory \cite{AL}
is that $\C^n$ for $n>1$ enjoys the density property; in fact,
every polynomial vector field on $\C^n$ is a finite sum of complete
polynomial vector fields (the shear fields).

Varolin proved \cite{Varolin2001} that any domain of the
form $(\C^*)^k\times \C^l$ with $k+l\ge 2$ and $k\ge 1$
enjoys the density property; we shall need this for 
the manifold $\C^*\times \C$. (Here $\C^*=\C\bs \{0\}$.)

\begin{lemma}
\label{lemma2}
Assume that $X$ is a Stein manifold with the density property.
Choose a distance function $\dist_X$ on $X$.
Let $\psi_1, \ldots,\psi_k\in \Aut X$ be 
such that for each $j=1,\ldots, k$ there exists a 
$\cC^2$ path $\theta_{j,t}\in \Aut X$ $(t\in [0,1])$
with $\theta_{j,0}=\Id_X$ and $\theta_{j,1}=\psi_j$.
Given distinct points $a_1,\ldots,a_k\in \C^*$, 
a compact set $K\subset X$ and a number $\epsilon>0$,
there exists a holomorphic map $\Psi \colon \C\times X\to X$
satisfying the following properties:
\begin{itemize}
\item[\rm (i)]   $\Psi_\zeta=\Psi(\zeta,\cdotp)\in \Aut X$ for all $\zeta\in \C$, 
\item[\rm (ii)]  $\Psi_0=\Id_X$,
\item[\rm (iii)] $\sup_{x\in K} \dist_X\bigl(\Psi(a_j,x),\psi_j(x)\bigr) <\epsilon$
for $j=1,\ldots,k$.
\end{itemize}
\end{lemma}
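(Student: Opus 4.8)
The plan is to produce $\Psi$ as a single composition of complex-time flows of $\C$-complete holomorphic vector fields, in which the individual flow times are prescribed polynomials in the parameter $\zeta$. Two ingredients enter: (a) the And\'ersen--Lempert approximation of each $\psi_j$ by a composition of such flows, made available by the density property of $X$; and (b) a Lagrange interpolation in $\zeta$ that switches the various flows on and off at the points $0,a_1,\ldots,a_k$.

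\emph{Step 1: approximating each $\psi_j$.} For fixed $j$ the isotopy $t\mapsto\theta_{j,t}$ generates the time-dependent holomorphic field
\[
  V_{j,t}=\big(\tfrac{\d}{\d t}\theta_{j,t}\big)\circ\theta_{j,t}^{-1}\in\aleph_{\cO}(X),
\]
which depends in a $\cC^1$ fashion on $t\in[0,1]$ because $\theta_{j,t}$ is $\cC^2$. Since $X$ has the density property, each $V_{j,t}$ lies in the closure of $\Lie(X)$. Partitioning $[0,1]$ into short subintervals, freezing the field on each, and approximating the resulting autonomous fields by Lie combinations of $\C$-complete fields — reducing sums and brackets to compositions of the flows of the generating complete fields via the Lie--Trotter and commutator formulas — I obtain, for any compact $L\subset X$ and any $\delta>0$, finitely many $\C$-complete fields $V^{j}_{1},\ldots,V^{j}_{N_j}$ and real times $t^{j}_{1},\ldots,t^{j}_{N_j}$ with
\[
  \psi_j=\theta_{j,1}\ \approx\ \phi^{V^{j}_{N_j}}_{t^{j}_{N_j}}\circ\cdots\circ\phi^{V^{j}_{1}}_{t^{j}_{1}}
  \qquad\text{on } L,
\]
where $\phi^{V}_{t}$ denotes the time-$t$ flow of a $\C$-complete field $V$.

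\emph{Step 2: gluing by interpolation.} I concatenate these fields into one list $V_1,\ldots,V_N$, $N=\sum_j N_j$, the first $N_1$ coming from $\psi_1$, the next $N_2$ from $\psi_2$, and so on, and write $\tau_i$ for the (real) time attached to $V_i$ in its block. For an index $i$ belonging to the block of $\psi_j$, let $c_i\colon\C\to\C$ be the polynomial of degree $\le k$ fixed by Lagrange interpolation through the $k+1$ distinct nodes $0,a_1,\ldots,a_k$ with
\[
  c_i(0)=0,\qquad c_i(a_j)=\tau_i,\qquad c_i(a_{j'})=0\ \ (j'\ne j);
\]
this is possible exactly because the $a_j\in\C^*$ are distinct and nonzero. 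I then define
\[
  \Psi(\zeta,x)=\Psi_\zeta(x)=\big(\phi^{V_N}_{c_N(\zeta)}\circ\cdots\circ\phi^{V_1}_{c_1(\zeta)}\big)(x).
\]
Each factor is jointly holomorphic in $(\zeta,x)$ and the $c_i$ are polynomials, so $\Psi$ is holomorphic on $\C\times X$; as each $\phi^{V_i}_{c_i(\zeta)}\in\Aut X$, this gives (i). Since $c_i(0)=0$ for every $i$, all factors equal $\Id_X$ at $\zeta=0$, giving (ii). At $\zeta=a_j$ every field outside the $j$-th block has time $0$ and hence flow $\Id_X$; dropping these identity factors leaves precisely the approximant of Step 1, so choosing $\delta<\epsilon$ and $L$ a compact neighborhood of $K$ large enough to contain the intermediate orbits in that composition yields (iii).

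The main obstacle is Step 1: the density property only furnishes approximation inside $\Lie(X)$, so passing from an arbitrary isotopy to an honest finite composition of complete flows requires the full And\'ersen--Lempert machinery (Euler subdivision of the isotopy together with the Trotter/commutator reductions), with attention to keeping every approximation uniform on one compact set that absorbs the intermediate orbits. Once Step 1 is in hand, Step 2 is elementary: the hypotheses that the $a_j$ be distinct and nonzero serve only to define the switching polynomials $c_i$, and the $\cC^2$ regularity of the paths is used only to make $V_{j,t}$ depend smoothly enough on $t$ for the subdivision estimates.
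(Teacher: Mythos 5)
Your proposal is correct and follows essentially the same route as the paper: the And\'ersen--Lempert/Forstneri\v c--Rosay machinery converts each isotopy $\theta_{j,t}$ into a finite composition of complex-time flows of $\C$-complete fields, and the resulting entire map $\Theta\colon\C^M\to\Aut X$ of flow-time parameters is then evaluated along an entire curve interpolating the identity at $0$ and the approximants at $a_1,\ldots,a_k$. Your Lagrange polynomials $c_i$ are simply an explicit choice of the entire map $g\colon\C\to\C^M$ with $g(0)=0$ and $g(a_j)=(0,\ldots,c^j,\ldots,0)$ that the paper invokes in its final step.
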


A holomorphic map $\Psi$ satisfying property (i) will be called
an {\em entire curve of holomorphic automorphisms of $X$.}
Here $\Id_X$ denotes the identity on $X$.

\begin{proof}
Consider a $\cC^2$ path $[0,1]\ni t\mapsto \gamma_t\in \Aut X$.
Pick a Stein Runge domain $U\subset X$ containing the set $K$.
Then $U_t=\gamma_t(U) \subset X$ is Runge in $X$ for all $t\in [0,1]$.
By \cite{AL} or, more explicitly, by (the proof of) 
\cite[Theorem 1.1]{FRosay} there exist finitely many complete 
holomorphic vector fields $V_1,\ldots,V_m$ on $X$,
with flows $\theta_{j,t}$, and numbers $c_1>0,\ldots,c_m>0$
such that the composition  
$\theta_{m,c_m}\circ\ldots \circ \theta_{1,c_1} \in \Aut X$
approximates the automorphism $\psi=\gamma_1$ 
within $\epsilon$ on the set $K$. 
(The proof in \cite{FRosay} is written for $X=\C^n$,
but it applies in the general case stated here. We first approximate 
$\gamma_t\colon U\to U_t$ by compositions of short time flows of globally 
defined holomorphic vector fields on $X$; here we need the Runge
property of the sets $U_t$. Since $X$ enjoys the density
property, these vector fields can be approximated 
by Lie combinations (using sums and commutators) of complete vector fields. 
This approximates $\gamma_t$ for each $t\in [0,1]$, uniformly on $K$, 
by compositions of flows of complete holomorphic vector fields on $X$.)
 
Consider $t^1=(t_1,\ldots,t_m)$ as complex coordinates
on $\C^m$. The map
\[
	\C^m\ni (t_1,\ldots,t_m) \mapsto \Theta_1(t_1,\ldots,t_m) =
	\theta_{m,t_m}\circ\ldots \circ \theta_{1,t_1} \in \Aut X
\]
is entire, its value at the origin $0\in \C^m$ is $\Id_X$,
and its value at the point $(c_1,\ldots,c_m)$ is an automorphism
that is $\epsilon$-close to $\psi=\gamma_1$ on $K$.

Using this argument we find for every $j=1,\ldots,k$
an integer $m_j\in \N$ and an entire map 
$\Theta_j\colon \C^{m_j} \to \Aut X$ such that 
$\Theta_j(0)=\Id_X$ and $\Theta_j(c^j_1,\ldots,c^j_{m_j})$ is 
$\epsilon$-close to $\psi_j$ on $K$ at some point 
of $c^j=(c^j_1,\ldots,c^j_{m_j})\in \C^{m_j}$.
Let $t=(t^1,\ldots,t^k)$ be the complex coordinates 
on $\C^M=\C^{m_1}\oplus\cdots \oplus \C^{m_k}$, where
$t^j=(t_1^j,\ldots,t^j_{m_j})\in \C^{m_j}$. The composition
\[
	\C^M\ni t\mapsto 
	\Theta(t^1,\ldots,t^k) = \Theta^k(t^k)\circ \cdots\circ \Theta^1(t^1) \in \Aut X
\]
is an entire map satisfying $\Theta(0)=\Id_X$ such that 
$\Theta(0,\ldots,0,c^j,0,\ldots,0)$ is $\epsilon$-close to 
$\psi_j$ on $K$ for each $j=1,\ldots,k$.

Choose an entire map $g\colon \C\to \C^M$ with
$g(a_j)= (0,\ldots,c^j,\ldots,0)$ for $j=1,\ldots,k$
and $g(0)=0$. Then the map 
$\C\ni \zeta \mapsto \Psi(\zeta)=\Theta(g(\zeta))\in \Aut X$
satisfies the conclusion of the lemma.
\end{proof}

\section{Proof of Theorem \ref{t:main}}
\label{S4}
We shall need the following result from \cite[\S 2]{Wold2008}.
This construction is due to Stolzenberg \cite{Stolz}; 
see also \cite[pp.\ 392--396]{Stout}.

\begin{lemma}
\label{lemma-W2008}
There exists a compact set $Y\subset\C^*\times \C$ (a union $Y=D_1\cup D_2$ 
of two embedded disjoint polynomially convex discs) such that 
\begin{itemize}
\item[\rm (i)] $Y$ is $\cO(\C^*\times\C)$-convex,
\item[\rm (ii)] 
the  polynomial hull $\widehat Y$ contains the origin $(0,0)\in \C^2$, and 
\item[\rm (iii)] 
for any nonempty open set $U\subset\C^*\times\C$ 
there exists a holomorphic automorphism 
$\psi\in\Aut (\C^*\times\C)$ such that 
$Y\subset \psi(U)$.
\end{itemize}
\end{lemma}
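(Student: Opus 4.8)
The plan is to build the two discs concretely, following Stolzenberg's construction, and then arrange the group-theoretic flexibility in (iii) by exploiting the density property of $\C^*\times\C$.

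The plan is to build the two discs by the classical mechanism, going back to Wermer and Stolzenberg, by which the union of two polynomially convex discs can fail to be polynomially convex, positioning the ``bridge'' between them so that it crosses the excluded axis $\{w=0\}$; the homogeneity in (iii) will then follow from the density property of $\C^*\times\C$.

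First I would produce, together with the discs, an \emph{analytic annulus} joining them. Writing the coordinates on $\C^*\times\C$ as $(w,z)$ with $w\in\C^*$, let $A=\{\zeta\in\C : r<|\zeta|<1\}$ and look for a map
\[
	F\colon \overline A\longrightarrow \C^2,\qquad F(\zeta)=\bigl(F_1(\zeta),F_2(\zeta)\bigr),
\]
holomorphic on $A$ and continuous up to $\partial A$, whose restrictions to the two boundary circles $\{|\zeta|=1\}$ and $\{|\zeta|=r\}$ are embeddings onto disjoint analytic arcs $\gamma_1$ and $\gamma_2$, with $\gamma_i$ contained in an embedded holomorphic disc $D_i\subset\C^*\times\C$, and arranged so that the $w$-component $F_1$ has a single simple zero in $A$ whose image is the origin, i.e. $(0,0)\in F(A)$ and $F(A)\cap\{w=0\}=\{(0,0)\}$. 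A multiply connected domain is forced here: if the boundary of an analytic \emph{disc} landed in $Y=D_1\cup D_2$ it would, by connectedness, lie in a single $D_i$, and then the disc could not escape the polynomially convex set $D_i\subset\C^*\times\C$ to reach the origin. Granting the annulus, property (ii) is immediate from the continuity principle for polynomial hulls: $F(A)\subset\widehat Y$, so $(0,0)\in\widehat Y$. Each $D_i$ is made polynomially convex by construction (for instance as a graph over a polynomially convex set in the $w$-plane), which is part of the requirement that $Y=D_1\cup D_2$ be a union of two polynomially convex discs.

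The delicate point, and the one I expect to be the main obstacle, is to secure (i) \emph{simultaneously} with (ii). The two requirements pull in opposite directions: the hull must reach the axis $\{w=0\}$ in order to capture the origin, yet it must add no point of $\C^*\times\C$ beyond $Y$ itself. Since polynomials belong to $\cO(\C^*\times\C)$, the $\cO(\C^*\times\C)$-convex hull of $Y$ is contained in $\widehat Y\cap(\C^*\times\C)=Y\cup\bigl(F(A)\setminus\{(0,0)\}\bigr)$, so what has to be shown is that the bridging annulus $F(A)$, although it lies in the polynomial hull, contributes nothing to the $\cO(\C^*\times\C)$-hull. This is not a formal consequence of a maximum-principle argument, because pulling back a function such as $w^{-1}\in\cO(\C^*\times\C)$ by $F$ produces a pole at the preimage of the origin and so gives no interior bound; it is exactly here that Stolzenberg's careful choice of the arcs is needed, producing enough functions in $\cO(\C^*\times\C)$ (invertibility of $w$ together with functions separating $D_1$ from $D_2$) to exclude every point of $(\C^*\times\C)\setminus Y$ from the hull. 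I would follow the construction recorded in \cite[\S2]{Wold2008} and \cite[pp.\ 392--396]{Stout} for this step rather than reprove it.

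Finally, property (iii) I would derive from the density property of $\C^*\times\C$, established by Varolin \cite{Varolin2001}, via the And\'ersen--Lempert theory \cite{AL,FRosay}. Given a nonempty open set $U\subset\C^*\times\C$, fix a coordinate ball $B\Subset U$ that is Runge in $\C^*\times\C$. Using (i), the set $Y$ is $\cO(\C^*\times\C)$-convex, and I would write down a $\cC^1$ isotopy of injective holomorphic maps $\Phi_t$ $(t\in[0,1])$ with $\Phi_0=\Id$ that shrinks each of the two discs and carries them into $B$, keeping every $\Phi_t(Y)$ $\cO(\C^*\times\C)$-convex (two small disjoint convex discs inside the Runge ball $B$ at the end). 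By the And\'ersen--Lempert theorem the time-one map $\Phi_1$ can be approximated uniformly on $Y$ by an automorphism $\phi\in\Aut(\C^*\times\C)$; since $\Phi_1(Y)\Subset U$, a sufficiently close approximation satisfies $\phi(Y)\subset U$. Setting $\psi=\phi^{-1}$ gives $Y\subset\psi(U)$, which is (iii).
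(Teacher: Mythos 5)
Your proposal takes essentially the same route as the paper: for properties (i) and (ii) you, like the paper, rely on Stolzenberg's two-disc construction as recorded in \cite[\S 2]{Wold2008} and \cite[pp.~392--396]{Stout} (correctly identifying the simultaneous validity of (i) and (ii) as the delicate point deferred to those sources), and you obtain (iii) from Varolin's density property of $\C^*\times\C$ \cite{Varolin2001} together with And\'ersen--Lempert approximation \cite{AL,FRosay} of an isotopy shrinking the two discs into $U$, which is precisely Wold's Lemma 3.1 as invoked in the paper. Your additional glosses --- the annulus mechanism, the maximum-principle argument giving $F(\overline A)\subset \widehat Y$, and the observation that a single analytic disc with boundary in $Y$ would be trapped in one polynomially convex component --- are correct but do not constitute a different argument.
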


Property (iii) is \cite[Lemma 3.1]{Wold2008}:
Since $\C^*\times\C$ enjoys the density property
according to Varolin \cite{Varolin2001}, the isotopy that shrinks 
each of the two discs $D_1,D_2\subset Y$ 
to a point in $U$ can be approximated by an isotopy 
of automorphisms of $\C^*\times\C$ by using the methods
in \cite{FRosay}.

\medskip\noindent
{\em Proof of Theorem \ref{t:main}.}
We give the proof for $n=2$.
Let $B=\{b_1,b_2,\ldots\}$ be as in the theorem.
Choose a set $Y\subset \C^*\times \C$ satisfying Lemma \ref{lemma-W2008}.
Pick a closed ball $K\subset \C^2$ (or any compact set with nonempty interior).

We shall inductively construct a sequence of injective holomorphic maps 
$\Phi^k\colon S\times\C^2 \hra  S\times\C^2$ $(k=1,2,\ldots)$ of the form
\[
	\Phi^k(s,z)=(s,\phi^k_s(z)),\quad s\in S,\ z\in \C^2,
\]
such that, setting 
\begin{equation}
\label{phiks}
	\wt \phi^k_s=\phi^k_s\circ\phi^{k-1}_s\circ\cdots\circ\phi^1_s,
	\qquad 
	K^{k}_s = \wt \phi^k_s (K) \subset \C^2,
\end{equation}
the following properties hold for all $k\in\N$:
\begin{itemize}
\item[\rm (i)]   
$\Omega^{k}:=\Phi^k(S\times\C^2)\subset S\times (\C^*\times\C)$,
\item[\rm (ii)]  the fiber $\Omega^k_s =\phi^k_s(\C^2)$ 
is Runge in $\C^2$ for all $s\in A_1\cup\cdots\cup A_k$, and
\item[\rm (iii)] $Y\subset {\rm Int}\, K^{k}_s$ for each 
$s\in \{b_1,\ldots,b_k\}$. In particular, the polynomial hull
of the set $K^{k}_s$ contains the origin for every such $s$. 
\end{itemize}

Suppose for the moment that we have such a sequence.
Let $X$ denote the limit manifold and let
$\Psi^k\colon X^k=S\times\C^2\bihol \wt X^k \subset X$ be the 
induced inclusions (see \S\ref{S2}). 

If $s\in \cup_k A_k=A$ then property (ii) insures, 
in view of Lemma \ref{lemma1}, that the 
fiber $X_s$ is biholomorphic to $\C^2$. 

Suppose now that $s=b_j$ for some $j\in \N$. Property (iii)
shows that for every integer $k\ge j$ the polynomial 
hull of the set $K^{k}_s$ contains the origin of $\C^2$; 
in particular, $\wh{K^{k}_s}$ is not contained in 
$\Omega^k_s\subset \C^*\times\C$. 
For the corresponding subsets of the limit manifold 
$X_s$ we get in view of (\ref{comm}) that 
\[
		\wh{\Psi^{k+1}_s(K^k_s)} \not\subset \wt X^k_s, \qquad k=j,j+1,\ldots,
\]
where the hull is with respect to the 
algebra of holomorphic functions on the domain 
$\wt X^{k+1}_s \subset X_s$. Let $K_s=\Psi^1_s(K)$ 
denote the compact set in the fiber $X_s$ determined by $K$;
note that $K_s\subset \wt X^1_s$ and 
$K_s=\Psi^{k+1}_s(K^k_s)$ for any $k\in\N$
according to (\ref{comm}) and (\ref{phiks}). 
The above display then gives
\[
	\wh{(K_s)}_{\cO(\wt X^{k+1}_s)} \not\subset \wt X^k_s,
	\quad k=1,2,\ldots.
\]
Since $\wt X^{k+1}_s$ is a domain in $X_s$, we trivially have 
$\wh{(K_s)}_{\cO(\wt X^{k+1}_s)} \subset \wh{(K_s)}_{\cO(X_s)}$; 
hence the hull $\wh{(K_s)}_{\cO(X_s)}$ 
is not contained in $\wt X^k_s$ for any $k\in \N$. As the 
domains $\wt X^k_s$ exhaust $X_s$, this hull is noncompact. 
Hence $X_s$ is not holomorphically convex 
(and therefore not Stein) for any $s \in B$. 

This proves Theorem \ref{t:main} provided that we can find a sequence
with the stated properties.

We begin with some initial choices of domains and maps.
Pick a Fatou-Bieberbach map $\theta\colon \C^2\bihol D \subset \C^*\times\C$
whose image $D=\theta(\C^2)$ is Runge in $\C^2$. (Such $D$ can be obtained 
as the attracting basin of an automorphism of $\C^2$ 
that fixes the complex line $\{0\}\times \C$.) 
Let $U={\rm Int}\, K\subset \C^2$; then 
$\theta(U)\subset D$ is a nonempty open set in $\C^*\times\C$.
For each $k=1,2,\ldots$ we choose a holomorphic function 
$f_k\colon S\to \C$ such that $f_k=0$ on the subvariety 
$A_1\cup\cdots\cup A_k$ of $S$ and $f_k(b_j)=j$ for 
$j=1,\ldots,k$. (If the set $B\subset X\bs A$ also contains a closed complex
subvariety $B'$ of $X$ of positive dimension, we let $f_k=1$ on $B'$.)

We now construct the first map $\Phi^1(s,z)=(s,\phi^1_s(z))$.
Lemma \ref{lemma-W2008} furnishes an automorphism
$\psi \in\Aut (\C^*\times\C)$ such that $Y\subset \psi(\theta(U))$.
By Lemma \ref{lemma2} there exists an entire curve
of automorphisms $\Psi_\zeta \in \Aut (\C^*\times \C)$
$(\zeta\in\C)$ such that $\Psi_0=\Id_{\C^*\times \C}$
and $\Psi_1$ approximates $\psi$ close enough on the 
compact set $\theta(K)$ 
so that $Y\subset \Psi_1(\theta(U))$. Hence 
$(0,0) \in \wh Y \subset \wh{\Psi_1(\theta(K))}$. Set 
\[
	\phi^1_s(z) = \Psi_{f_1(s)}(\theta(z)), \quad s\in S,\ z\in \C^2.
\]
If $s\in A_1$ then $f_1(s)=0$ and hence 
$\phi^1_s(z)=\Psi_0(\theta(z))=\theta(z)$,
so $\phi^1_s=\theta$. If $s=b_1$ then $f_1(s)=1$ and 
hence $\phi^1_s=\Psi_1\circ \theta$. Thus 
$Y\subset \phi^1_{b_1}(U)$ and the polynomial hull 
$\wh{\phi^1_{b_1}(K)}$ contains the origin of $\C^2$. 
This gives the initial step. 

Suppose that we have found maps $\Phi^1,\ldots,\Phi^k$
satisfying conditions (i)--(iii) above; we now construct 
the next map $\Phi^{k+1}$ in the sequence. 
Recall that $\wt\phi^k_s\colon \C^2\to \C^2$ 
is the map defined by (\ref{phiks}). Set  
\[
	U^k_s = (\theta \circ \wt\phi^k_s) (U), \quad s\in S;
\]	
this is a nonempty open set contained in the compact set 
$\theta(K^k_s) \subset \C^*\times \C$.
Lemma \ref{lemma-W2008} gives for each $j=1,\ldots,k+1$
an automorphism $\psi_j\in \Aut (\C^*\times\C)$ 
such that $Y\subset \psi_j(U^k_{b_j})$.
By Lemma \ref{lemma2} there exists an entire curve
of automorphisms $\Psi_\zeta \in \Aut (\C^*\times \C)$
$(\zeta\in\C)$ such that $\Psi_0=\Id_{\C^*\times \C}$ and 
$\Psi_j$ approximates $\psi_j$ for every $j=1,\ldots,k+1$. 
If the approximation is close enough on the compact set 
$\theta(K^k_{b_j})$ then $Y\subset (\Psi_j\circ \theta)(K^k_{b_j})$ 
and hence the origin $(0,0)\in\C^2$ is contained  
in the polynomial hull of $(\Psi_j\circ \theta)(K^k_{b_j})$. 
Set 
\[
	\phi^{k+1}_s(z) = \Psi_{f_{k+1}(s)}\circ\theta(z), 
	\quad s\in S,\ z\in \C^2.
\]
If $s\in A_1\cup\cdots\cup A_{k+1}$ then $f_{k+1}(s)=0$
and hence $\phi^{k+1}_s=\theta$.
If $s=b_j$ for some $j=1,\ldots,k+1$ then 
$f_{k+1}(b_j)=j$ and hence $\phi^{k+1}_{b_j}=\Psi_j\circ\theta$;
therefore the polynomial hull of the set 
$\phi^{k+1}_{b_j}(K^k_{b_j})$ contains the origin. 
Taking $\phi^{k+1}_s$ as the next map in the sequence and
setting 
\[
	\wt \phi^{k+1}_s = \phi^{k+1}_s \circ\wt \phi^{k}_s,
	\qquad 
	K^{k+1}_{s}=\phi^{k+1}_s(K^k_s)
\]
we see that properties (i)--(iii) hold also for $k+1$. 
The induction may continue.
This completes the proof of Theorem \ref{t:main}.

\bibliographystyle{amsplain}

\begin{thebibliography}{10}

\bibitem{AL}
Andersen, E., Lempert, L., On the group of automorphisms of $\C^n$.
Invent.\ Math., 110 (1992), 371--388.

\bibitem{Forn1976}
Forn\ae ss, J. E.,  An increasing sequence of Stein manifolds whose limit is not Stein. 
Math.\ Ann., 223 (1976), 275--277. 

\bibitem{Forn2004}
Forn\ae ss, J. E., Short $\C^k$. 
In: Complex Analysis in Several Variables
-- Memorial Conference of Kiyoshi Oka's Centennial Birthday, pp. 95--108, 
Adv.\ Stud.\ Pure Math., 42, Math.\ Soc.\ Japan, Tokyo, 2004.  

\bibitem{FRosay} 
Forstneri\v c, F., Rosay, J.-P.,
Approximation of biholomorphic mappings by automorphisms of $\C^n$.
Invent.\ Math., 112 (1993), 323--349.



\bibitem{Stolz}
Stolzenberg, G., On the analytic part of a Runge hull.
Math.\ Ann., 164 (1966), 286--290.


\bibitem{Stout}
Stout, E.\ L.:
The Theory of Uniform Algebras.
Bogden\&Quigley, Inc., Tarrytown-on-Hudson, New York, 1971.

\bibitem{Varolin2001}
Varolin, D.,
The density property for complex manifolds and geometric structures. 
J.\ Geom.\ Anal., 11 (2001), 135--160.

\bibitem{Varolin2000}
Varolin, D.,
The density property for complex manifolds and geometric structures II.
Internat.\ J.\ Math.,  11 (2000), 837--847.

\bibitem{Wermer}
Wermer, J., An example concerning polynomial convexity.  
Math.\ Ann., 139 (1959), 147--149.



\bibitem{Wold2008}
Wold, E. F., 
A Fatou-Bieberbach domain in $\C^2$ which is not Runge.
Math.\ Ann., 340 (2008), 775--780.

\bibitem{Wold2010}
Wold, E.\ F., A long $\C^2$ which is not Stein.
Ark.\ Mat., 48 (2010), 207–-210.

\end{thebibliography}

\end{document}